\newtheorem{theorem}{Theorem}
\theoremstyle{plain}
\newtheorem{axiom}{Axiom}
\newtheorem{conjecture}{Conjecture}
\newtheorem{corollary}{Corollary}
\newtheorem*{defUN}{Definition}
\newtheorem*{thmUN}{Theorem}
\newtheorem{lemma}{Lemma}
\newtheorem{proposition}{Proposition}
\theoremstyle{definition}
\newtheorem{definition}{Definition}
\newtheorem{example}{Example}
\newtheorem{exercise}{Exercise}
\theoremstyle{definition}
\newtheorem{remark}{Remark}
\newcolumntype{M}[1]{>{\centering\arraybackslash}m{#1}}
\chardef\@x10\chardef\@xv60
\def\tcitime{
\def\@time{%
  \@minute\time\@hour\@minute\divide\@hour\@xv
  \ifnum\@hour<\@x 0\fi\the\@hour:%
  \multiply\@hour\@xv\advance\@minute-\@hour
  \ifnum\@minute<\@x 0\fi\the\@minute
  }}%
\def\QCTOpt[#1]#2{%
  \def\QCTOptB{#1}
  \def\QCTOptA{#2}
}
\def\QCTNOpt#1{%
  \def\QCTOptA{#1}
  \let\QCTOptB\empty
}
\def\Qct{%
  \@ifnextchar[{%
    \QCTOpt}{\QCTNOpt}
}
\def\QCBOpt[#1]#2{%
  \def\QCBOptB{#1}
  \def\QCBOptA{#2}
}
\def\QCBNOpt#1{%
  \def\QCBOptA{#1}
  \let\QCBOptB\empty
}
\def\Qcb{%
  \@ifnextchar[{%
    \QCBOpt}{\QCBNOpt}
}
\def\PrepCapArgs{%
  \ifx\QCBOptA\empty
    \ifx\QCTOptA\empty
      {}%
    \else
      \ifx\QCTOptB\empty
        {\QCTOptA}%
      \else
        [\QCTOptB]{\QCTOptA}%
      \fi
    \fi
  \else
    \ifx\QCBOptA\empty
      {}%
    \else
      \ifx\QCBOptB\empty
        {\QCBOptA}%
      \else
        [\QCBOptB]{\QCBOptA}%
      \fi
    \fi
  \fi
}
\def\GRAPHICSPS#1{%
 \ifcase\GRAPHICSTYPE
   \special{ps: #1}%
 \or
   \special{language "PS", include "#1"}%
 \fi
}%
\def\graffile#1#2#3#4{%
    \bgroup
    \leavevmode
    \@ifundefined{bbl@deactivate}{\def~{\string~}}{\activesoff}
    \raise -#4 \BOXTHEFRAME{%
        \hbox to #2{\raise #3\hbox to #2{\null #1\hfil}}}%
    \egroup
}%
\def\draftbox#1#2#3#4{%
 \leavevmode\raise -#4 \hbox{%
  \frame{\rlap{\protect\tiny #1}\hbox to #2%
   {\vrule height#3 width\z@ depth\z@\hfil}%
  }%
 }%
}%
\newif\ifwasdraft
\def\GRAPHIC#1#2#3#4#5{%
 \ifnum\draft=\@ne\draftbox{#2}{#3}{#4}{#5}%
  \else\graffile{#1}{#3}{#4}{#5}%
  \fi
 }%
\def\addtoLaTeXparams#1{%
    \edef\LaTeXparams{\LaTeXparams #1}}%
\newif\ifBoxFrame \BoxFramefalse
\newif\ifOverFrame \OverFramefalse
\newif\ifUnderFrame \UnderFramefalse
\def\BOXTHEFRAME#1{%
   \hbox{%
      \ifBoxFrame
         \frame{#1}%
      \else
         {#1}%
      \fi
   }%
}
\def\doFRAMEparams#1{\BoxFramefalse\OverFramefalse\UnderFramefalse\readFRAMEparams#1\end}%
\def\readFRAMEparams#1{%
 \ifx#1\end%
  \let\next=\relax
  \else
  \ifx#1i\dispkind=\z@\fi
  \ifx#1d\dispkind=\@ne\fi
  \ifx#1f\dispkind=\tw@\fi
  \ifx#1t\addtoLaTeXparams{t}\fi
  \ifx#1b\addtoLaTeXparams{b}\fi
  \ifx#1p\addtoLaTeXparams{p}\fi
  \ifx#1h\addtoLaTeXparams{h}\fi
  \ifx#1X\BoxFrametrue\fi
  \ifx#1O\OverFrametrue\fi
  \ifx#1U\UnderFrametrue\fi
  \ifx#1w
    \ifnum\draft=1\wasdrafttrue\else\wasdraftfalse\fi
    \draft=\@ne
  \fi
  \let\next=\readFRAMEparams
  \fi
 \next
 }%
\def\IFRAME#1#2#3#4#5#6{%
      \bgroup
      \let\QCTOptA\empty
      \let\QCTOptB\empty
      \let\QCBOptA\empty
      \let\QCBOptB\empty
      #6%
      \parindent=0pt%
      \leftskip=0pt
      \rightskip=0pt
      \setbox0 = \hbox{\QCBOptA}%
      \@tempdima = #1\relax
      \ifOverFrame
          \typeout{This is not implemented yet}%
          \show\HELP
      \else
         \ifdim\wd0>\@tempdima
            \advance\@tempdima by \@tempdima
            \ifdim\wd0 >\@tempdima
               \textwidth=\@tempdima
               \setbox1 =\vbox{%
                  \noindent\hbox to \@tempdima{\hfill\GRAPHIC{#5}{#4}{#1}{#2}{#3}\hfill}\\%
                  \noindent\hbox to \@tempdima{\parbox[b]{\@tempdima}{\QCBOptA}}%
               }%
               \wd1=\@tempdima
            \else
               \textwidth=\wd0
               \setbox1 =\vbox{%
                 \noindent\hbox to \wd0{\hfill\GRAPHIC{#5}{#4}{#1}{#2}{#3}\hfill}\\%
                 \noindent\hbox{\QCBOptA}%
               }%
               \wd1=\wd0
            \fi
         \else
            \ifdim\wd0>0pt
              \hsize=\@tempdima
              \setbox1 =\vbox{%
                \unskip\GRAPHIC{#5}{#4}{#1}{#2}{0pt}%
                \break
                \unskip\hbox to \@tempdima{\hfill \QCBOptA\hfill}%
              }%
              \wd1=\@tempdima
           \else
              \hsize=\@tempdima
              \setbox1 =\vbox{%
                \unskip\GRAPHIC{#5}{#4}{#1}{#2}{0pt}%
              }%
              \wd1=\@tempdima
           \fi
         \fi
         \@tempdimb=\ht1
         \advance\@tempdimb by \dp1
         \advance\@tempdimb by -#2%
         \advance\@tempdimb by #3%
         \leavevmode
         \raise -\@tempdimb \hbox{\box1}%
      \fi
      \egroup%
}%
\def\DFRAME#1#2#3#4#5{%
 \begin{center}
     \let\QCTOptA\empty
     \let\QCTOptB\empty
     \let\QCBOptA\empty
     \let\QCBOptB\empty
     \ifOverFrame 
        #5\QCTOptA\par
     \fi
     \GRAPHIC{#4}{#3}{#1}{#2}{\z@}
     \ifUnderFrame 
        \nobreak\par\nobreak#5\QCBOptA
     \fi
 \end{center}%
 }%
\def\FFRAME#1#2#3#4#5#6#7{%
 \begin{figure}[#1]%
  \let\QCTOptA\empty
  \let\QCTOptB\empty
  \let\QCBOptA\empty
  \let\QCBOptB\empty
  \ifOverFrame
    #4
    \ifx\QCTOptA\empty
    \else
      \ifx\QCTOptB\empty
        \caption{\QCTOptA}%
      \else
        \caption[\QCTOptB]{\QCTOptA}%
      \fi
    \fi
    \ifUnderFrame\else
      \label{#5}%
    \fi
  \else
    \UnderFrametrue%
  \fi
  \begin{center}\GRAPHIC{#7}{#6}{#2}{#3}{\z@}\end{center}%
  \ifUnderFrame
    #4
    \ifx\QCBOptA\empty
      \caption{}%
    \else
      \ifx\QCBOptB\empty
        \caption{\QCBOptA}%
      \else
        \caption[\QCBOptB]{\QCBOptA}%
      \fi
    \fi
    \label{#5}%
  \fi
  \end{figure}%
 }%
\def\makeactives{
  \catcode`\"=\active
  \catcode`\;=\active
  \catcode`\:=\active
  \catcode`\'=\active
  \catcode`\~=\active
}
   \gdef\activesoff{%
      \def"{\string"}
      \def;{\string;}
      \def:{\string:}
      \def'{\string'}
      \def~{\string~}
    }
\def\FRAME#1#2#3#4#5#6#7#8{%
 \bgroup
 \ifnum\draft=\@ne
   \wasdrafttrue
 \else
   \wasdraftfalse%
 \fi
 \def\LaTeXparams{}%
 \dispkind=\z@
 \def\LaTeXparams{}%
 \doFRAMEparams{#1}%
 \ifnum\dispkind=\z@\IFRAME{#2}{#3}{#4}{#7}{#8}{#5}\else
  \ifnum\dispkind=\@ne\DFRAME{#2}{#3}{#7}{#8}{#5}\else
   \ifnum\dispkind=\tw@
    \edef\@tempa{\noexpand\FFRAME{\LaTeXparams}}%
    \@tempa{#2}{#3}{#5}{#6}{#7}{#8}%
    \fi
   \fi
  \fi
  \ifwasdraft\draft=1\else\draft=0\fi{}%
  \egroup
 }%
\def\TEXUX#1{"texux"}
\long\def\QQQ#1#2{%
     \long\expandafter\def\csname#1\endcsname{#2}}%
\long\def\QQA#1#2{}%
\def\QTR#1#2{{\csname#1\endcsname #2}}
\def\EXPAND#1[#2]#3{}%
\def\NOEXPAND#1[#2]#3{}%
\def\LaTeXparent#1{}%
\def\ChildStyles#1{}%
\def\ChildDefaults#1{}%
\def\QTagDef#1#2#3{}%
  \providecommand{\UNICODE}[2][]{}
\def\QQfnmark#1{\footnotemark}
 \def\abstract{%
  \if@twocolumn
   \section*{Abstract (Not appropriate in this style!)}%
   \else \small 
   \begin{center}{\bf Abstract\vspace{-.5em}\vspace{\z@}}\end{center}%
   \quotation 
   \fi
  }%
   \def\registered{\relax\ifmmode{}\r@gistered
                    \else$\m@th\r@gistered$\fi}%
 \def\r@gistered{^{\ooalign
  {\hfil\raise.07ex\hbox{$\scriptstyle\rm\text{R}$}\hfil\crcr
  \mathhexbox20D}}}}{}%
\def\TEXTsymbol#1{\mbox{$#1$}}%
\newdimen\theight
\def\Column{%
 \vadjust{\setbox\z@=\hbox{\scriptsize\quad\quad tcol}%
  \theight=\ht\z@\advance\theight by \dp\z@\advance\theight by \lineskip
  \kern -\theight \vbox to \theight{%
   \rightline{\rlap{\box\z@}}%
   \vss
   }%
  }%
 }%
\def\qed{%
 \ifhmode\unskip\nobreak\fi\ifmmode\ifinner\else\hskip5\p@\fi\fi
 \hbox{\hskip5\p@\vrule width4\p@ height6\p@ depth1.5\p@\hskip\p@}%
 }%
\def\miss{\hbox{\vrule height2\p@ width 2\p@ depth\z@}}%
\def\tcol#1{{\baselineskip=6\p@ \vcenter{#1}} \Column}  %
\def\newfmtname{LaTeX2e}
  \DeclareOldFontCommand{\rm}{\normalfont\rmfamily}{\mathrm}
  \DeclareOldFontCommand{\sf}{\normalfont\sffamily}{\mathsf}
  \DeclareOldFontCommand{\tt}{\normalfont\ttfamily}{\mathtt}
  \DeclareOldFontCommand{\bf}{\normalfont\bfseries}{\mathbf}
  \DeclareOldFontCommand{\it}{\normalfont\itshape}{\mathit}
  \DeclareOldFontCommand{\sl}{\normalfont\slshape}{\@nomath\sl}
  \DeclareOldFontCommand{\sc}{\normalfont\scshape}{\@nomath\sc}
\def\alpha{{\Greekmath 010B}}%
\def\beta{{\Greekmath 010C}}%
\def\gamma{{\Greekmath 010D}}%
\def\delta{{\Greekmath 010E}}%
\def\epsilon{{\Greekmath 010F}}%
\def\zeta{{\Greekmath 0110}}%
\def\eta{{\Greekmath 0111}}%
\def\theta{{\Greekmath 0112}}%
\def\iota{{\Greekmath 0113}}%
\def\kappa{{\Greekmath 0114}}%
\def\lambda{{\Greekmath 0115}}%
\def\mu{{\Greekmath 0116}}%
\def\nu{{\Greekmath 0117}}%
\def\xi{{\Greekmath 0118}}%
\def\pi{{\Greekmath 0119}}%
\def\rho{{\Greekmath 011A}}%
\def\sigma{{\Greekmath 011B}}%
\def\tau{{\Greekmath 011C}}%
\def\upsilon{{\Greekmath 011D}}%
\def\phi{{\Greekmath 011E}}%
\def\chi{{\Greekmath 011F}}%
\def\psi{{\Greekmath 0120}}%
\def\omega{{\Greekmath 0121}}%
\def\varepsilon{{\Greekmath 0122}}%
\def\vartheta{{\Greekmath 0123}}%
\def\varpi{{\Greekmath 0124}}%
\def\varrho{{\Greekmath 0125}}%
\def\varsigma{{\Greekmath 0126}}%
\def\varphi{{\Greekmath 0127}}%
\def\nabla{{\Greekmath 0272}}
\def\FindBoldGroup{%
   {\setbox0=\hbox{$\mathbf{x\global\edef\theboldgroup{\the\mathgroup}}$}}%
}
\def\Greekmath#1#2#3#4{%
    \if@compatibility
        \ifnum\mathgroup=\symbold
           \mathchoice{\mbox{\boldmath$\displaystyle\mathchar"#1#2#3#4$}}%
                      {\mbox{\boldmath$\textstyle\mathchar"#1#2#3#4$}}%
                      {\mbox{\boldmath$\scriptstyle\mathchar"#1#2#3#4$}}%
                      {\mbox{\boldmath$\scriptscriptstyle\mathchar"#1#2#3#4$}}%
        \else
           \mathchar"#1#2#3#4%
        \fi 
    \else 
        \FindBoldGroup
        \ifnum\mathgroup=\theboldgroup 
           \mathchoice{\mbox{\boldmath$\displaystyle\mathchar"#1#2#3#4$}}%
                      {\mbox{\boldmath$\textstyle\mathchar"#1#2#3#4$}}%
                      {\mbox{\boldmath$\scriptstyle\mathchar"#1#2#3#4$}}%
                      {\mbox{\boldmath$\scriptscriptstyle\mathchar"#1#2#3#4$}}%
        \else
           \mathchar"#1#2#3#4%
        \fi     	    
	  \fi}
\newif\ifGreekBold  \GreekBoldfalse
\let\SAVEPBF=\pbf
\def\pbf{\GreekBoldtrue\SAVEPBF}%
  \newcounter{equationnumber}  
  \def\mathletters{%
     \addtocounter{equation}{1}
     \edef\@currentlabel{\theequation}%
     \setcounter{equationnumber}{\c@equation}
     \setcounter{equation}{0}%
     \edef\theequation{\@currentlabel\noexpand\alph{equation}}%
  }
    \def\BibTeX{{\rm B\kern-.05em{\sc i\kern-.025em b}\kern-.08em
                 T\kern-.1667em\lower.7ex\hbox{E}\kern-.125emX}}}{}%
\def\AmS{{\protect\usefont{OMS}{cmsy}{m}{n}%
                A\kern-.1667em\lower.5ex\hbox{M}\kern-.125emS}}}{}%
\def\@@eqncr{\let\@tempa\relax
    \ifcase\@eqcnt \def\@tempa{& & &}\or \def\@tempa{& &}%
      \else \def\@tempa{&}\fi
     \@tempa
     \if@eqnsw
        \iftag@
           \@taggnum
        \else
           \@eqnnum\stepcounter{equation}%
        \fi
     \fi
     \global\tag@false
     \global\@eqnswtrue
     \global\@eqcnt\z@\cr}
\def\TCItag{\@ifnextchar*{\@TCItagstar}{\@TCItag}}
\def\@TCItag#1{%
    \global\tag@true
    \global\def\@taggnum{(#1)}}
\def\@TCItagstar*#1{%
    \global\tag@true
    \global\def\@taggnum{#1}}
\let\DOTSI\relax
\def\RIfM@{\relax\ifmmode}%
\def\FN@{\futurelet\next}%
\def\iint{\DOTSI\intno@\tw@\FN@\ints@}%
\def\iiint{\DOTSI\intno@\thr@@\FN@\ints@}%
\def\iiiint{\DOTSI\intno@4 \FN@\ints@}%
\def\idotsint{\DOTSI\intno@\z@\FN@\ints@}%
\def\ints@{\findlimits@\ints@@}%
\newif\iflimtoken@
\newif\iflimits@
\def\findlimits@{\limtoken@true\ifx\next\limits\limits@true
 \else\ifx\next\nolimits\limits@false\else
 \limtoken@false\ifx\ilimits@\nolimits\limits@false\else
 \ifinner\limits@false\else\limits@true\fi\fi\fi\fi}%
\def\multint@{\int\ifnum\intno@=\z@\intdots@                          
 \else\intkern@\fi                                                    
 \ifnum\intno@>\tw@\int\intkern@\fi                                   
 \ifnum\intno@>\thr@@\int\intkern@\fi                                 
 \int}
\def\multintlimits@{\intop\ifnum\intno@=\z@\intdots@\else\intkern@\fi
 \ifnum\intno@>\tw@\intop\intkern@\fi
 \ifnum\intno@>\thr@@\intop\intkern@\fi\intop}%
\def\intic@{%
    \mathchoice{\hskip.5em}{\hskip.4em}{\hskip.4em}{\hskip.4em}}%
\def\negintic@{\mathchoice
 {\hskip-.5em}{\hskip-.4em}{\hskip-.4em}{\hskip-.4em}}%
\def\ints@@{\iflimtoken@                                              
 \def\ints@@@{\iflimits@\negintic@
   \mathop{\intic@\multintlimits@}\limits                             
  \else\multint@\nolimits\fi                                          
  \eat@}
 \else                                                                
 \def\ints@@@{\iflimits@\negintic@
  \mathop{\intic@\multintlimits@}\limits\else
  \multint@\nolimits\fi}\fi\ints@@@}%
\def\intkern@{\mathchoice{\!\!\!}{\!\!}{\!\!}{\!\!}}%
\def\plaincdots@{\mathinner{\cdotp\cdotp\cdotp}}%
\def\intdots@{\mathchoice{\plaincdots@}%
 {{\cdotp}\mkern1.5mu{\cdotp}\mkern1.5mu{\cdotp}}%
 {{\cdotp}\mkern1mu{\cdotp}\mkern1mu{\cdotp}}%
 {{\cdotp}\mkern1mu{\cdotp}\mkern1mu{\cdotp}}}%
\def\RIfM@{\relax\protect\ifmmode}
\def\text{\RIfM@\expandafter\text@\else\expandafter\mbox\fi}
\let\nfss@text\text
\def\text@#1{\mathchoice
   {\textdef@\displaystyle\f@size{#1}}%
   {\textdef@\textstyle\tf@size{\firstchoice@false #1}}%
   {\textdef@\textstyle\sf@size{\firstchoice@false #1}}%
   {\textdef@\textstyle \ssf@size{\firstchoice@false #1}}%
   \glb@settings}
\def\textdef@#1#2#3{\hbox{{%
                    \everymath{#1}%
                    \let\f@size#2\selectfont
                    #3}}}
\newif\iffirstchoice@
\def\Let@{\relax\iffalse{\fi\let\\=\cr\iffalse}\fi}%
\def\vspace@{\def\vspace##1{\crcr\noalign{\vskip##1\relax}}}%
\def\multilimits@{\bgroup\vspace@\Let@
 \baselineskip\fontdimen10 \scriptfont\tw@
 \advance\baselineskip\fontdimen12 \scriptfont\tw@
 \lineskip\thr@@\fontdimen8 \scriptfont\thr@@
 \lineskiplimit\lineskip
 \vbox\bgroup\ialign\bgroup\hfil$\m@th\scriptstyle{##}$\hfil\crcr}%
\def\Sb{_\multilimits@}%
\def\endSb{\crcr\egroup\egroup\egroup}%
\def\Sp{^\multilimits@}%
\newdimen\ex@
\def\rightarrowfill@#1{$#1\m@th\mathord-\mkern-6mu\cleaders
 \hbox{$#1\mkern-2mu\mathord-\mkern-2mu$}\hfill
 \mkern-6mu\mathord\rightarrow$}%
\def\leftarrowfill@#1{$#1\m@th\mathord\leftarrow\mkern-6mu\cleaders
 \hbox{$#1\mkern-2mu\mathord-\mkern-2mu$}\hfill\mkern-6mu\mathord-$}%
\def\leftrightarrowfill@#1{$#1\m@th\mathord\leftarrow
\mkern-6mu\cleaders
 \hbox{$#1\mkern-2mu\mathord-\mkern-2mu$}\hfill
 \mkern-6mu\mathord\rightarrow$}%
\def\overrightarrow{\mathpalette\overrightarrow@}%
\def\overrightarrow@#1#2{\vbox{\ialign{##\crcr\rightarrowfill@#1\crcr
 \noalign{\kern-\ex@\nointerlineskip}$\m@th\hfil#1#2\hfil$\crcr}}}%
\def\overleftarrow{\mathpalette\overleftarrow@}%
\def\overleftarrow@#1#2{\vbox{\ialign{##\crcr\leftarrowfill@#1\crcr
 \noalign{\kern-\ex@\nointerlineskip}$\m@th\hfil#1#2\hfil$\crcr}}}%
\def\overleftrightarrow{\mathpalette\overleftrightarrow@}%
\def\overleftrightarrow@#1#2{\vbox{\ialign{##\crcr
   \leftrightarrowfill@#1\crcr
 \noalign{\kern-\ex@\nointerlineskip}$\m@th\hfil#1#2\hfil$\crcr}}}%
\def\underrightarrow{\mathpalette\underrightarrow@}%
\def\underrightarrow@#1#2{\vtop{\ialign{##\crcr$\m@th\hfil#1#2\hfil
  $\crcr\noalign{\nointerlineskip}\rightarrowfill@#1\crcr}}}%
\def\underleftarrow{\mathpalette\underleftarrow@}%
\def\underleftarrow@#1#2{\vtop{\ialign{##\crcr$\m@th\hfil#1#2\hfil
  $\crcr\noalign{\nointerlineskip}\leftarrowfill@#1\crcr}}}%
\def\underleftrightarrow{\mathpalette\underleftrightarrow@}%
\def\underleftrightarrow@#1#2{\vtop{\ialign{##\crcr$\m@th
  \hfil#1#2\hfil$\crcr
 \noalign{\nointerlineskip}\leftrightarrowfill@#1\crcr}}}%
\def\qopnamewl@#1{\mathop{\operator@font#1}\nlimits@}
\let\nlimits@\displaylimits
\def\setboxz@h{\setbox\z@\hbox}
\def\varlim@#1#2{\mathop{\vtop{\ialign{##\crcr
 \hfil$#1\m@th\operator@font lim$\hfil\crcr
 \noalign{\nointerlineskip}#2#1\crcr
 \noalign{\nointerlineskip\kern-\ex@}\crcr}}}}
 \def\rightarrowfill@#1{\m@th\setboxz@h{$#1-$}\ht\z@\z@
  $#1\copy\z@\mkern-6mu\cleaders
  \hbox{$#1\mkern-2mu\box\z@\mkern-2mu$}\hfill
  \mkern-6mu\mathord\rightarrow$}
\def\leftarrowfill@#1{\m@th\setboxz@h{$#1-$}\ht\z@\z@
  $#1\mathord\leftarrow\mkern-6mu\cleaders
  \hbox{$#1\mkern-2mu\copy\z@\mkern-2mu$}\hfill
  \mkern-6mu\box\z@$}
\def\projlim{\qopnamewl@{proj\,lim}}
\def\injlim{\qopnamewl@{inj\,lim}}
\def\varinjlim{\mathpalette\varlim@\rightarrowfill@}
\def\varprojlim{\mathpalette\varlim@\leftarrowfill@}
\def\varliminf{\mathpalette\varliminf@{}}
\def\varliminf@#1{\mathop{\underline{\vrule\@depth.2\ex@\@width\z@
   \hbox{$#1\m@th\operator@font lim$}}}}
\def\varlimsup{\mathpalette\varlimsup@{}}
\def\varlimsup@#1{\mathop{\overline
  {\hbox{$#1\m@th\operator@font lim$}}}}
\def\align{\@verbatim \frenchspacing\@vobeyspaces \@alignverbatim
You are using the "align" environment in a style in which it is not defined.}
\let\csname endalign*\endcsname =\endtrivlist
\def\alignat{\@verbatim \frenchspacing\@vobeyspaces \@alignatverbatim
You are using the "alignat" environment in a style in which it is not defined.}
\let\csname endalignat*\endcsname =\endtrivlist
\def\xalignat{\@verbatim \frenchspacing\@vobeyspaces \@xalignatverbatim
You are using the "xalignat" environment in a style in which it is not defined.}
\let\csname endxalignat*\endcsname =\endtrivlist
\def\gather{\@verbatim \frenchspacing\@vobeyspaces \@gatherverbatim
You are using the "gather" environment in a style in which it is not defined.}
\let\csname endgather*\endcsname =\endtrivlist
\def\multiline{\@verbatim \frenchspacing\@vobeyspaces \@multilineverbatim
You are using the "multiline" environment in a style in which it is not defined.}
\let\csname endmultiline*\endcsname =\endtrivlist
\def\arrax{\@verbatim \frenchspacing\@vobeyspaces \@arraxverbatim
You are using a type of "array" construct that is only allowed in AmS-LaTeX.}
\def\tabulax{\@verbatim \frenchspacing\@vobeyspaces \@tabulaxverbatim
You are using a type of "tabular" construct that is only allowed in AmS-LaTeX.}
\let\csname endarrax*\endcsname =\endtrivlist
\let\csname endtabulax*\endcsname =\endtrivlist
 \def\endequation{%
     \ifmmode\ifinner 
      \iftag@
        \addtocounter{equation}{-1} 
        $\hfil
           \displaywidth\linewidth\@taggnum\egroup \endtrivlist
        \global\tag@false
        \global\@ignoretrue   
      \else
        $\hfil
           \displaywidth\linewidth\@eqnnum\egroup \endtrivlist
        \global\tag@false
        \global\@ignoretrue 
      \fi
     \else   
      \iftag@
        \addtocounter{equation}{-1} 
        \eqno \hbox{\@taggnum}
        \global\tag@false%
        $$\global\@ignoretrue
      \else
        \eqno \hbox{\@eqnnum}
        $$\global\@ignoretrue
      \fi
     \fi\fi
 } 
 \newif\iftag@ \tag@false
 \def\TCItag{\@ifnextchar*{\@TCItagstar}{\@TCItag}}
 \def\@TCItag#1{%
     \global\tag@true
     \global\def\@taggnum{(#1)}}
 \def\@TCItagstar*#1{%
     \global\tag@true
     \global\def\@taggnum{#1}}
     \def\tag{\@ifnextchar*{\@tagstar}{\@tag}}
     \def\@tag#1{%
         \global\tag@true
         \global\def\@taggnum{(#1)}}
     \def\@tagstar*#1{%
         \global\tag@true
         \global\def\@taggnum{#1}}
\begin{document}
\def\sym#1{\ifmmode^{#1}\else\(^{#1}\)\fi}

\title{On the representation of the nested logit model}
\author{Alfred Galichon{$^{\S }$}}
\date{October 14, 2020. (First draft: 7/2019). Funding from NSF grant DMS-1716489 and
ERC-CoG grant No. 866274 is
acknowledged. The author is thankful to the editor (Peter Phillips) and two
anonymous reviewers, as well as Kenneth Train, Mogens Fosgerau, Lars-G\"{o}%
ran Mattsson, Bernard Salani\'{e} and J\"{o}rgen Weibull for helpful
comments, and especially to Matt Shum for pointing out the important
reference to Cardell's paper after a first version of this paper was
circulated.\\
{\indent$^{\S }$New York University, departments of Economics and of
Mathematics; ag133@nyu.edu}}

\begin{abstract}
We give a two-line proof of a long-standing conjecture of Ben-Akiva and
Lerman (1985) regarding the random utility representation of the nested
logit model, thus providing a renewed and straightforward textbook treatment
of that model. As an application, we provide a closed-form formula for the
correlation between two Fr\'{e}chet random variables coupled by a Gumbel
copula.

\vspace{0.1cm} \emph{Keywords:} random utility models, multinomial choice,
nested logit model

\vspace{0.1cm} \emph{JEL Classification:} C51, C60
\end{abstract}

\maketitle

\section{Positive stable distributions and Gumbel distributions\label%
{sec:mainthm}}

Ben-Akiva (1973) introduced the nested logit model in his PhD thesis. In the
light of McFadden's (1978) theory of the generalized extreme value
distribution, one may view the nested logit model as a choice model over
various alternatives $j\in \bigcup_{n\in \mathcal{N}}\mathcal{N}_{n}$, where
$\mathcal{N}_{n}$ is a \emph{nest}, a subset of alternatives. The utility
associated with alternative $j$ in nest $n$ will be $U_{nj}+\varepsilon
_{nj} $, where $U_{nj}$ is the deterministic (\textquotedblleft
systematic\textquotedblright ) part of the utility and $\varepsilon _{nj}$
is a random utility term. The nested logit model specifies that the random
vector $\left( \varepsilon _{nj}\right) $ has cumulative distribution
function (c.d.f.) $F_{\varepsilon }$ given by%
\begin{equation}
F_{\varepsilon }\left( \left( a_{j}\right) \right) =\exp \left( -\sum_{n\in
\mathcal{N}}\left( \sum_{j\in \mathcal{N}_{n}}e^{-\frac{a_{j}}{\lambda _{n}}%
}\right) ^{\lambda _{n}}\right) ,  \label{nestedLogit}
\end{equation}%
where $\lambda _{n}\in (0,1]$ is a measure of how correlated the random
utility terms are within nest $n$, $\lambda _{n}=1$ meaning full
independence, and the limit $\lambda _{n}\rightarrow 0$ meaning perfect
correlation within the nest. As pointed out in McFadden (1978), the
correlation between random utility terms within nest $n$ is \emph{not} given
by $1-\lambda _{n}$. The correct formula for the correlation ($1-\lambda
_{n}^{2}$) had been obtained by Tiago de Oliveira (1958) from a non-trivial
analytic derivation.

In their 1985 book, Ben-Akiva and Lerman proposed (section 10.3) a heuristic
interpretation of the nested logit model. Calling $\mathcal{G}\left( \mu
,\beta \right) $ the distribution of the Gumbel random variable with
location-scale parameters $\left( \mu ,\beta \right) $, whose cumulative
distribution function (c.d.f.) is thus $F_{\epsilon }\left( z\right) =\exp
\left( -\exp \left( -\left( z-\mu \right) /\beta \right) \right) $,
Ben-Akiva and Lerman (p. 282)\footnote{%
Cardell (1997) mentions that a similar conjecture appeared in a 1975
manuscript of his, but we haven't been able to access that manuscript.}
conjectured that if $\epsilon $ has distribution $\mathcal{G}\left(
0,1\right) $, there is a random variable $\eta $ independent from $\epsilon $%
\ such that $\lambda \epsilon +\eta $ has distribution $\mathcal{G}\left(
0,1\right) $. This conjecture allowed them to derive the correlation
structure between the random utilities within a nest. The existence of $\eta
$ was the object of work by Cardell (1997) who provides a formula for the
density of $\eta $ as the sum of an alternating series by the inversion of
its characteristics function. However, Cardell's proof of existence is
incomplete, as it omits to show that the candidate density obtained as the
sum of this series is indeed nonnegative. In the present paper we provide a
complete, very short proof involving \textquotedblleft positive stable
distributions\textquotedblright , which are defined as follows in Feller
(1971), sections VI.1 and XIII.6\footnote{%
The existence of positive stable distributions was established in Pollard
(1948), using a lemma in Bochner (1937).}:

\begin{defUN}
For $0<\lambda \leq 1$, there is a probability distribution $\mathcal{P}%
\left( \lambda \right) $ over $\mathbb{R}_{+}$, called \emph{positive stable
distribution with parameter $\lambda $}, such that if $\left(
Z_{1},...Z_{N}\right) $ are i.i.d. draws from that distribution, then for
positive reals $\alpha _{1},...,\alpha _{N}$,
\begin{equation}
\frac{\alpha _{1}Z_{1}+...+\alpha _{N}Z_{N}}{\left( \alpha _{1}^{\lambda
}+...+\alpha _{N}^{\lambda }\right) ^{1/\lambda }}\sim \mathcal{P}\left(
\lambda \right) ,  \label{plus-stability}
\end{equation}%
and the Laplace distribution of $\mathcal{P}\left( \lambda \right) $ is $%
\mathbb{E}_{\mathcal{P}\left( \lambda \right) }\left[ \exp \left( -tZ\right) %
\right] =\exp \left( -t^{\lambda }\right) $.
\end{defUN}

Here and throughout the paper we use $\sim $ to indicate \textquotedblleft
distributed as\textquotedblright . Our main result connects the Gumbel
distribution and positive stable distributions:

\begin{thmUN}
For $0<\lambda \leq 1$, if random variables $\epsilon $ and $Z$ are
independent, and $\epsilon \sim \mathcal{G}\left( 0,1\right) $, and $Z\sim
\mathcal{P}\left( \lambda \right) $ has a positive stable distribution with
parameter $\lambda $, then $\epsilon +\log Z\sim \mathcal{G}\left(
0,1/\lambda \right) $.
\end{thmUN}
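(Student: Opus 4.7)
The plan is to compute the cumulative distribution function of $\epsilon + \log Z$ directly by conditioning on $Z$ and then using the Laplace transform characterization of $\mathcal{P}(\lambda)$ given in the definition. Since the target distribution $\mathcal{G}(0, 1/\lambda)$ has c.d.f. $z \mapsto \exp(-e^{-\lambda z})$, the goal reduces to showing $\Pr(\epsilon + \log Z \le z) = \exp(-e^{-\lambda z})$ for all $z \in \mathbb{R}$.

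First I would condition on $Z$. By independence, $\Pr(\epsilon + \log Z \le z \mid Z) = \Pr(\epsilon \le z - \log Z \mid Z) = \exp(-e^{-(z - \log Z)}) = \exp(-Z e^{-z})$, using the Gumbel c.d.f. with parameters $(0,1)$. Taking expectations in $Z$ gives
\[
\Pr(\epsilon + \log Z \le z) = \mathbb{E}\bigl[\exp(-Z e^{-z})\bigr].
\]
Second, I would apply the Laplace transform identity $\mathbb{E}_{\mathcal{P}(\lambda)}[\exp(-tZ)] = \exp(-t^\lambda)$ with $t = e^{-z} > 0$. This immediately yields $\mathbb{E}[\exp(-Z e^{-z})] = \exp(-e^{-\lambda z})$, which is precisely the c.d.f. of $\mathcal{G}(0,1/\lambda)$.

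There is essentially no obstacle: the two ingredients — the explicit Gumbel c.d.f. and the explicit Laplace transform of $\mathcal{P}(\lambda)$ — combine in one line each. The only subtlety worth flagging is that we need $t = e^{-z} > 0$ so that the Laplace transform formula applies for every real $z$, which is automatic. This matches the paper's claim that the proof is a two-liner and, via $\log Z$, delivers the missing piece of Ben-Akiva and Lerman's conjecture without having to invert a characteristic function or verify nonnegativity of an alternating series as in Cardell (1997).
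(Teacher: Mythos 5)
Your proof is correct and is essentially identical to the paper's: both condition on $Z$ to get $\mathbb{E}[\exp(-Z e^{-z})]$ and then apply the Laplace transform $\mathbb{E}[\exp(-tZ)]=\exp(-t^{\lambda})$ at $t=e^{-z}$. No issues.
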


\begin{proof}
Assuming $\epsilon \sim \mathcal{G}\left( 0,1\right) $, and $Z\sim \mathcal{P%
}\left( \lambda \right) $, the c.d.f. of $\epsilon +\log Z$ is expressed as%
\begin{eqnarray*}
&&\Pr \left( \epsilon +\log Z\leq x\right) =\mathbb{E}\left[ \mathbb{E}\left[
1\left\{ \epsilon \leq x-\log Z\right\} |Z\right] \right] =\mathbb{E}\left[
F_{\epsilon }\left( x-\log Z\right) \right] \\
&=&\mathbb{E}\left[ \exp \left( -\exp \left( \log Z-x\right) \right) \right]
=\mathbb{E}\left[ \exp \left( -Z\exp \left( -x\right) \right) \right] =\exp
\left( -\exp \left( -\lambda x\right) \right) ,
\end{eqnarray*}%
which is the c.d.f. of the $\mathcal{G}\left( 0,1/\lambda \right) $
distribution.
\end{proof}

As a result, we can deduce the moments of $\eta =\lambda \log Z$, as is done
in Ben-Akiva and Lerman (1985), section 10.3. The expectation of $\eta $ is $%
\mathbb{E}\left[ \eta \right] =\left( 1-\lambda \right) \mathbb{E}\left[
\epsilon \right] =\left( 1-\lambda \right) \gamma $ where $\gamma $ is
Euler's constant. The variance of $\eta $ is $var\left( \eta \right) =\left(
1-\lambda ^{2}\right) var\left( \epsilon \right) =\left( 1-\lambda
^{2}\right) \pi ^{2}/6$. More generally, the moment generating function of $%
\eta $ is given by $\mathbb{E}\left[ e^{t\eta }\right] =\Gamma \left(
1-t\right) /\Gamma \left( 1-\lambda t\right) $, that is, for $\kappa \in
\left( 0,\lambda \right) $,%
\begin{equation}
\mathbb{E}\left[ Z^{\kappa }\right] =\frac{\Gamma \left( 1-\kappa /\lambda
\right) }{\Gamma \left( 1-\kappa \right) }.  \label{momentsOfZ}
\end{equation}

The probability distribution function $f_{\lambda }$ associated with $%
\mathcal{P}\left( \lambda \right) $ is in general not available in closed
form, except for a few cases, namely $\lambda =1/3,1/2$ and $2/3$, in which
case one obtains
\begin{eqnarray*}
f_{1/3}\left( x\right) &=&\frac{1}{3^{1/3}x^{4/3}}Ai\left( \frac{1}{%
3^{1/3}x^{1/3}}\right) \\
f_{1/2}\left( x\right) &=&\frac{1}{2\pi ^{1/2}}\frac{1}{x^{3/2}}\exp \left(
\frac{-1}{4x}\right) \\
f_{2/3}\left( x\right) &=&\frac{2^{4/3}\exp \left( \frac{-4}{27x^{2}}\right)
}{3^{3/2}\pi ^{1/2}x^{7/3}}U\left( \frac{1}{2},\frac{1}{6},\frac{4}{27x^{2}}%
\right)
\end{eqnarray*}%
where $Ai$ is the Airy function and $U$ is the confluent hypergeometric
function; see Dishon and Bendler (1990). In general, $f_{\lambda }$ cannot
be given an explicit expression, but can be expressed as a convergent series
expansion using Humbert's (1945) formula
\begin{equation*}
f_{\lambda }\left( x\right) =\frac{-1}{\pi }\sum_{k=0}^{\infty }\frac{\left(
-1\right) ^{k}}{k!}\sin \left( k\pi \lambda \right) \frac{\Gamma \left(
\lambda k+1\right) }{x^{\lambda k+1}},
\end{equation*}%
where $\Gamma $ is the Gamma function, classically defined as%
\begin{equation*}
\Gamma \left( z\right) =\int_{0}^{+\infty }t^{z-1}e^{-t}dt.
\end{equation*}

\begin{remark}[Relating plus-stability and max-stability]
Setting $\eta _{\lambda }=\lambda \log Z_{\lambda }$ for $Z_{\lambda }\sim
\mathcal{P}\left( \lambda \right) $, and taking $\left( \tilde{\eta}%
_{\lambda ,j}\right) _{j\in \mathcal{J}}$ a vector of independent copies of $%
\eta _{\lambda }$, it follows from~(\ref{plus-stability})\ that we have%
\begin{equation*}
\lambda \log \left( \sum_{j\in \mathcal{J}}\exp \left( \frac{u_{j}+\eta
_{\lambda ,j}}{\lambda }\right) \right) =_{D}\log \left( \sum_{j\in \mathcal{%
J}}e^{u_{j}}\right) +\eta _{\lambda },
\end{equation*}%
and it follows from $\eta _{\lambda }+\lambda \epsilon \sim \mathcal{G}%
\left( 0,1\right) $ that $\eta _{\lambda }$ converges weakly to $\mathcal{G}%
\left( 0,1\right) $. As the log-sum-exp (a.k.a. smooth-max) converges to the
max when $\lambda \rightarrow 0$, we recover the max-stability of the Gumbel
distribution as a consequence.
\end{remark}

\bigskip

As noted by Cardell (1997, lemma 1), another interesting consequence of the
theorem is as follows.

\begin{remark}
Take $\lambda _{1},\lambda _{2}\in \left( 0,1\right) $. From the theorem, it
holds that if $Z_{1}\sim \mathcal{P}\left( \lambda _{1}\right) $ and $%
\epsilon _{1}\sim \mathcal{G}\left( 0,1\right) $ are independent, then $%
\epsilon _{2}=\lambda _{1}\left( \epsilon _{1}+\log Z_{1}\right) \sim
\mathcal{G}\left( 0,1\right) $. Thus, if $Z_{2}\sim \mathcal{P}\left(
\lambda _{2}\right) $ is independent from $\epsilon _{1}$ and $Z_{1}$, then $%
\lambda _{2}\left( \epsilon _{2}+\log Z_{2}\right) \sim \mathcal{G}\left(
0,1\right) $, then
\begin{equation*}
\lambda _{2}\lambda _{1}\epsilon _{1}+\lambda _{2}\lambda _{1}\log
Z_{1}+\lambda _{2}\log Z_{2}\sim \mathcal{G}\left( 0,1\right) .
\end{equation*}%
As a result, $\lambda _{2}\lambda _{1}\log Z_{1}+\lambda _{2}\log Z_{2}$ has
the same distribution as $\lambda _{1}\lambda _{2}\log Z_{12}$, where $%
Z_{12}\sim \mathcal{P}\left( \lambda _{1}\lambda _{2}\right) $. Thus for $%
\lambda _{1},\lambda _{2}\in \left( 0,1\right) $, if $Z_{1}\sim \mathcal{P}%
\left( \lambda _{1}\right) $ and $Z_{2}\sim \mathcal{P}\left( \lambda
_{2}\right) $ are independent, then
\begin{equation*}
Z_{1}Z_{2}^{1/\lambda _{1}}\sim \mathcal{P}\left( \lambda _{1}\lambda
_{2}\right) .
\end{equation*}
\end{remark}

\section{Representation of the nested logit model}

We can use the result of the previous section to provide rigorous ground for
the representation of the random utility associated with the nested logit
model, which we carry first in the context of a single layer of nests,
before moving on to the case of multiple layers.

\subsection{Single layer of nests}

The model described in section~\ref{sec:mainthm} is a single nest model,
where, as it is well-known, if $\left( \varepsilon _{nj}\right) \sim
F_{\varepsilon }$ has the distribution given by~(\ref{nestedLogit}), then
the probability that $j\in \mathcal{N}_{n}$ maximizes $U_{j^{\prime
}}+\varepsilon _{j^{\prime }}$ over $j^{\prime }\in \mathcal{N}_{n^{\prime
}} $ and $n^{\prime }\in \mathcal{N}$, is given by%
\begin{equation}
\pi _{j}=\frac{\left( \sum_{j^{\prime }\in \mathcal{N}_{n}}e^{\frac{%
U_{j^{\prime }}}{\lambda _{n}}}\right) ^{\lambda _{n}}}{\sum_{\substack{ %
n^{\prime }\in \mathcal{N}  \\ j^{\prime }\in \mathcal{N}_{n^{\prime }}}}%
\left( \sum_{j^{\prime }\in \mathcal{N}_{n^{\prime }}}e^{\frac{U_{j^{\prime
}}}{\lambda _{n^{\prime }}}}\right) ^{\lambda _{n^{\prime }}}}\frac{e^{\frac{%
U_{j}}{\lambda _{n}}}}{\sum_{j^{\prime }\in \mathcal{N}_{n}}e^{\frac{%
U_{j^{\prime }}}{\lambda _{n}}}},  \label{mktSharesSingleLayerNest}
\end{equation}%
which is the product of the probability that nest $n$ is chosen by the
conditional probability that $j$ is chosen within nest $n$. It is also
well-known that in this case, the Emax operator $G\left( U\right) =\mathbb{E}%
\left[ \max_{n\in \mathcal{N},j\in \mathcal{N}_{n}}\left\{ U_{j}+\varepsilon
_{j}\right\} \right] $ has expression%
\begin{equation}
G\left( U\right) =\log \sum_{n\in \mathcal{N}}\left( \sum_{j\in \mathcal{N}%
_{n}}e^{\frac{U_{j}}{\lambda _{n}}}\right) ^{\lambda _{n}},
\label{EmaxSingleLayerNest}
\end{equation}%
and we can recover that, as predicted by the Daly-Zachary-Williams theorem,
the expression of $\pi _{j}$ (\ref{mktSharesSingleLayerNest}) is indeed
obtained as the derivative of $G$ with respect to $U_{j}$.

\bigskip

With these reminders in mind, we can use the results in section~\ref%
{sec:mainthm} to provide a factor representation of the random vector $%
\left( \varepsilon _{j}\right) $.

\begin{proposition}
\label{prop:representation}If $\left( \varepsilon _{j}\right) $ is a random
vector distributed according to the nested logit distribution~(\ref%
{nestedLogit}), then one can represent the random vector $\left( \varepsilon
_{j}\right) $ as
\begin{equation}
\varepsilon _{j}=\lambda _{n}\left( \epsilon _{j}+\log Z_{n}\right)
\label{repNested}
\end{equation}%
where $\epsilon _{j}\sim \mathcal{G}\left( 0,1\right) $ for $j\in \mathcal{N}%
_{n},n\in \mathcal{N}$, and $Z_{n}\sim \mathcal{P}\left( \lambda _{n}\right)
$ for $n\in \mathcal{N}$ are all independently distributed. In particular,
the correlation between $\varepsilon _{j}$ and $\varepsilon _{j^{\prime }}$
for $j\in \mathcal{N}_{n}$ and $j^{\prime }\in \mathcal{N}_{n^{\prime }}$ is
$\left( 1-\lambda _{n}^{2}\right) $ if $n=n^{\prime }$, $0$ otherwise.
\end{proposition}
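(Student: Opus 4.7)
The plan is to verify the representation by directly computing the joint c.d.f. of the right-hand side of~(\ref{repNested}) and matching it to~(\ref{nestedLogit}), then reading off the correlations from the representation.

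First I would define the candidate random vector: let $(\epsilon_j)_{j}$ and $(Z_n)_{n\in\mathcal{N}}$ be mutually independent with $\epsilon_j\sim\mathcal{G}(0,1)$ and $Z_n\sim\mathcal{P}(\lambda_n)$, and set $\tilde\varepsilon_j:=\lambda_n(\epsilon_j+\log Z_n)$ for $j\in\mathcal{N}_n$. I would then condition on $(Z_n)$ and use independence of the $\epsilon_j$'s to write
\begin{equation*}
\Pr(\tilde\varepsilon_j\le a_j,\ \forall j)=\mathbb{E}\Bigl[\prod_{n\in\mathcal{N}}\prod_{j\in\mathcal{N}_n}\exp\bigl(-Z_n\,e^{-a_j/\lambda_n}\bigr)\Bigr]=\prod_{n\in\mathcal{N}}\mathbb{E}\Bigl[\exp\bigl(-Z_n\,s_n\bigr)\Bigr],
\end{equation*}
where $s_n:=\sum_{j\in\mathcal{N}_n}e^{-a_j/\lambda_n}$ and the last equality uses independence across nests. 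Applying the Laplace transform formula $\mathbb{E}[\exp(-tZ_n)]=\exp(-t^{\lambda_n})$ with $t=s_n$ collapses this to $\exp\bigl(-\sum_n s_n^{\lambda_n}\bigr)$, which is exactly $F_\varepsilon$ in~(\ref{nestedLogit}). Since the joint c.d.f. characterizes the distribution, $(\tilde\varepsilon_j)$ and $(\varepsilon_j)$ agree in law, which is the representation claim.

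For the correlation structure, I would read everything off the representation. When $j,j'\in\mathcal{N}_n$ with $j\ne j'$, independence of $\epsilon_j,\epsilon_{j'},Z_n$ gives $\operatorname{Cov}(\varepsilon_j,\varepsilon_{j'})=\lambda_n^2\operatorname{Var}(\log Z_n)$. To evaluate $\operatorname{Var}(\log Z_n)$ without touching Humbert's series, I would exploit the theorem: $\lambda_n\epsilon_j+\lambda_n\log Z_n\sim\mathcal{G}(0,1)$, so taking variances of both sides and using independence of $\epsilon_j$ and $Z_n$ yields $\lambda_n^2(\pi^2/6)+\lambda_n^2\operatorname{Var}(\log Z_n)=\pi^2/6$, hence $\lambda_n^2\operatorname{Var}(\log Z_n)=(1-\lambda_n^2)\pi^2/6$. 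Dividing by $\operatorname{Var}(\varepsilon_j)=\pi^2/6$ gives the correlation $1-\lambda_n^2$. When $j\in\mathcal{N}_n$ and $j'\in\mathcal{N}_{n'}$ with $n\ne n'$, the building blocks in $\varepsilon_j$ and $\varepsilon_{j'}$ are disjoint and independent, so the correlation vanishes.

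There is no serious obstacle: the whole argument is an application of the theorem plus the Laplace transform of $\mathcal{P}(\lambda_n)$. The only mild subtlety is the interchange of product and expectation, which is justified by the conditional independence of the $\epsilon_j$'s given $(Z_n)$ together with the independence of the $Z_n$'s across nests; this is exactly the point where the positive-stable Laplace identity converts a nested product into the $(\cdot)^{\lambda_n}$ outer exponent that defines the nested logit c.d.f.
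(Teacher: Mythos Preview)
Your argument is correct, and the correlation derivation matches what the paper does after the main theorem. The route you take to the joint c.d.f., however, differs from the paper's. The paper first collapses within each nest using the max-stability of the Gumbel distribution, rewriting the event $\{\lambda_n\epsilon_j+\eta_n\le a_j\ \forall j\in\mathcal{N}_n\}$ as $\{\eta_n\le -\max_{j\in\mathcal{N}_n}\{-a_j+\lambda_n\epsilon_j\}\}$ and replacing that maximum by a single shifted Gumbel $\lambda_n\log\sum_{j}e^{-a_j/\lambda_n}+\lambda_n\hat\epsilon_n$; it then invokes the main theorem ($\lambda_n\hat\epsilon_n+\eta_n\sim\mathcal{G}(0,1)$) nest by nest to land on~(\ref{nestedLogit}). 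You instead condition on $(Z_n)$, write the conditional c.d.f. as a raw product of Gumbel c.d.f.'s, and apply the Laplace transform $\mathbb{E}[e^{-tZ_n}]=e^{-t^{\lambda_n}}$ directly. Your approach is shorter and never needs max-stability or an explicit appeal to the theorem; the paper's approach makes the two-stage ``choose a nest, then choose within the nest'' structure visible in the proof itself, which foreshadows the multi-layer induction in Section~\ref{par:multiple-nests}.
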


Note that the correlation structure was first rigorously derived by Tiago de
Oliveira (1958) using very different techniques, see also p. 266 of his 1997
monograph.

\begin{proof}
Letting $\eta _{n}=\lambda _{n}\exp Z_{n}$, the c.d.f. of the vector $\left(
\lambda _{n}\epsilon _{j}+\eta _{n}\right) _{j\in \mathcal{N}_{n},n\in
\mathcal{N}}$ is
\begin{equation*}
\Pr \left( \lambda _{n}\epsilon _{j}+\eta _{n}\leq a_{j}~\forall n\in
\mathcal{N},\forall j\in \mathcal{N}_{n}\right) =\Pr \left( \eta _{n}\leq
-\max_{j\in \mathcal{N}_{n}}\left\{ -a_{j}+\lambda _{n}\epsilon _{j}\right\}
\forall n\in \mathcal{N}\right) .
\end{equation*}%
But $\max_{j\in \mathcal{N}_{n}}\left\{ -a_{nj}+\lambda _{n}\epsilon
_{j}\right\} $ has the same distribution as $\lambda _{n}\log \sum_{j\in
\mathcal{N}_{n}}\exp \left( \frac{-a_{j}}{\lambda _{n}}\right) +\lambda _{n}%
\hat{\epsilon}_{n}$ where $\hat{\epsilon}_{n}$ has a $\mathcal{G}\left(
0,1\right) $ distribution and the $\hat{\epsilon}_{n}$ and the $\eta _{n}$
terms are all independent. Hence the displayed equation equals
\begin{equation*}
\Pr \left( \lambda _{n}\epsilon _{n}+\eta _{n}\leq -\lambda _{n}\log
\sum_{j\in \mathcal{N}_{n}}\exp \left( \frac{-a_{j}}{\lambda _{n}}\right)
\forall n\in \mathcal{N}\right)
\end{equation*}%
which, as $\left( \lambda _{n}\epsilon _{n}+\eta _{n}\right) \ $are i.i.d.
with distribution $\mathcal{G}\left( 0,1\right) $, is the same as
expression~(\ref{nestedLogit}).
\end{proof}

\begin{remark}
Representation~(\ref{repNested}) allows to view the nested logit model as a
mixed logit model. Indeed, it implies%
\begin{eqnarray*}
&&\Pr \left( U_{j}+\lambda _{n}\left( \epsilon _{j}+\log Z_{n}\right) \geq
U_{j^{\prime }}+\lambda _{n^{\prime }}\left( \epsilon _{j^{\prime }}+\log
Z_{n^{\prime }}\right) \forall n^{\prime }\in \mathcal{N},j^{\prime }\in
\mathcal{N}_{n^{\prime }}\right) \\
&=&\mathbb{E}\left[ \frac{Z_{n}\exp \left( U_{j}/\lambda _{n}\right) }{%
\sum_{n^{\prime }\in \mathcal{N},j^{\prime }\in \mathcal{N}_{n^{\prime
}}}Z_{n^{\prime }}\exp \left( U_{j^{\prime }}/\lambda _{n^{\prime }}\right) }%
\right] ,
\end{eqnarray*}%
where $Z_{n}\sim \mathcal{P}\left( \lambda _{n}\right) $, and therefore, if
the $Z_{n}^{k}$, $k\in \left\{ 1,...,K\right\} $ are $K$ draws from the $%
\mathcal{P}\left( \lambda _{n}\right) $ distribution, one can approximate
the previous expression by%
\begin{equation*}
\sum_{k=1}^{K}\frac{Z_{n}^{k}\exp \left( U_{j}/\lambda _{n}\right) }{%
\sum_{k^{\prime }=1}^{K}\sum_{n^{\prime }\in \mathcal{N},j^{\prime }\in
\mathcal{N}_{n^{\prime }}}Z_{n^{\prime }}^{k^{\prime }}\exp \left(
U_{j^{\prime }}/\lambda _{n^{\prime }}\right) }.
\end{equation*}%
The simulation of positive stable distribution is discussed in Ridout
(2009), where an algorithm is proposed and implemented in R.
\end{remark}

\subsection{Multiple layers of nests\label{par:multiple-nests}}

The nested logit model is defined in its full generality on an arborescence
of nests. In order to facilitate our description of the model, we need to
recall the standard mathematical terminology of trees.

\subsubsection{Trees}

Consider a finite set of \emph{nodes} $\mathcal{Z}$ with more than one
element, and call \emph{root} one of its nodes, denoted $0$. An \emph{%
arborescence} (a.k.a. \textquotedblleft directed rooted
tree\textquotedblright ) on $\mathcal{Z}$ is a directed graph such that
there is exactly one directed path from $0$ to any node $z$. Nodes that have
no outward arcs are called \emph{leaves} -- which we will identify in our
case as \textquotedblleft alternatives.\textquotedblright\ The set of
leaves/alternatives is denoted $\mathcal{J}$. We shall call \emph{nests} the
nodes which are not alternatives, and denote by $\mathcal{N}$ the set of
nests. We have therefore $\mathcal{Z}=\mathcal{N}\cup \mathcal{J}$, and $%
0\in \mathcal{N}$.

Each nest $n$ has a set $\mathcal{N}_{n}$ set of child nests and $\mathcal{J}%
_{n}$ set of child alternatives. Both $\mathcal{N}_{n}$ and $\mathcal{J}_{n}$
can be empty, but $\mathcal{N}_{n}\cup \mathcal{J}_{n}$ cannot. Alternatives
have no children: they are the \textquotedblleft leaves\textquotedblright\
of the arborescence.

The parent node of any node $z$ which is not the root is a nest denoted $%
n_{z}$.

The \emph{depth} $d_{z}$ of a node $z$ is the number of edges from the nest
to the root nest $0$. The depth of the root nest is zero.

The \emph{height} $h_{z}$ of a node $z$ is the number of edges on the
longest path from the nest to a leaf (alternative). In particular,
alternatives have height zero, and nodes that have only alternatives (and no
nests) as children, have height one. The height of the arborescence is the
height of $0$.

If a nest $n$ has depth $d$, the path from the root nest to $n$ along the
arborescence is denoted $n\left( 1\right) ,...,n\left( d\right) $, where $%
n\left( 1\right) \in \mathcal{N}_{0}$ and $n\left( k+1\right) \in \mathcal{N}%
_{n\left( k\right) }$, and $n\left( d\right) =n$. Note that by convention,
the root nest $0$ is omitted from this path.

If there is a path from $z$ to $z^{\prime }$ along the arborescence, one
says that $z$ is an \emph{ancestor} of $z^{\prime }$ and $z^{\prime }$ is a
\emph{descendant} of $z$.

We denote by $\widetilde{\mathcal{J}}_{z}$ the set of alternatives that are
descendant of node $z$. This is to be contrasted by $\mathcal{J}_{z}$, which
is the set of alternatives that are children of $z$. Of course, $\widetilde{%
\mathcal{J}}_{0}=\mathcal{J}$. Note that $\widetilde{\mathcal{J}}%
_{j}=\left\{ j\right\} $ for $j\in \mathcal{J}$.

The \emph{lowest common ancestor} to two nodes is the node which is an
ancestor of both of them and which has the largest depth.

\subsubsection{Reminders on the nested logit model}

For any nest $n\in \mathcal{N}$, we define $\lambda _{n}\in (0,1]\,$. We
assume that $\lambda _{0}=1$, and we define
\begin{equation*}
\Lambda _{n}=\prod_{t=1}^{d_{n}}\lambda _{n\left( t\right) }.
\end{equation*}

The nested logit model takes as an input the \emph{systematic utility} $%
U_{j} $ associated with each alternative $j\in \mathcal{J}$, and returns the
vector of \emph{choice probability} $\pi _{j}$ associated with each $j$, a
probability vector on $\mathcal{J}$. As is well-known, $\pi $ is computed as
follows:

1. Computing utilities by \emph{backward induction}. One defines the payoff
vector $u$ on all the nodes $z\in \mathcal{Z}=\mathcal{N}\cup \mathcal{J}$
recursively in ascending height order by%
\begin{equation}
\left\{
\begin{array}{l}
u_{z}=U_{z}\text{~for }z\in \mathcal{J}\text{ (i.e. }h_{z}=0\text{)} \\
u_{n}=\Lambda _{n}\log \left( \sum_{z\in \mathcal{N}_{n}\cup \mathcal{J}%
_{n}}\exp \left( u_{z}/\Lambda _{n}\right) \right) ~\text{for }h_{n}>1\text{.%
}%
\end{array}%
\right.  \label{utils}
\end{equation}

This is well-defined because the children of a nest of height $t$ has height
at most $t-1$.

2. Computing choice probabilities by \emph{forward induction}. Define $\pi
_{z}$ in ascending depth order by%
\begin{equation}
\left\{
\begin{array}{l}
\pi _{0}=1 \\
\pi _{z}=\pi _{n_{z}}\frac{\exp \left( u_{z}/\Lambda _{n_{z}}\right) }{%
\sum_{z^{\prime }\in \mathcal{N}_{n_{z}}\cup \mathcal{J}_{n_{z}}}\exp \left(
u_{z^{\prime }}/\Lambda _{n_{z}}\right) }\text{ for }z\neq 0.%
\end{array}%
\right.  \label{ccps}
\end{equation}

This is well-defined because the parent of a node of depth $t$ has depth $%
t-1 $. In particular, this procedure defines the probabilities $\pi _{j}$ on
the alternatives $\mathcal{J}$, which are the leaves of the arborescence.
Note that combining~(\ref{utils}) and~(\ref{ccps}) yields the \emph{log-odds
ratio formula}%
\begin{equation}
\frac{\pi _{z}}{\pi _{n}}=\exp \left( \frac{u_{z}-u_{n}}{\Lambda _{n}}%
\right) ,~\text{for }z\in \mathcal{N}_{n}\cup \mathcal{J}_{n}.  \label{gibbs}
\end{equation}

\bigskip

It is well-known that the nested logit model is an additive random utility
model. Consider a random vector $\varepsilon $ whose c.d.f. is defined by
backward induction by%
\begin{eqnarray}
&&\Pr \left( \varepsilon _{j}\leq A_{j}~\forall j\in \mathcal{J}\right)
=\exp \left( -\exp \left( -a_{0}\right) \right) ,
\label{nestedLogitMultiLayer} \\
&&\text{where }a\in \mathbb{R}^{\mathcal{Z}}\text{ is defined recursively by}
\notag \\
&&\left\{
\begin{array}{l}
a_{z}=A_{z}\text{~for }z\in \mathcal{J}\text{ (i.e. }h_{z}=0\text{)} \\
a_{n}=-\Lambda _{n}\log \left( \sum_{z\in \mathcal{N}_{n}\cup \mathcal{J}%
_{n}}\exp \left( -a_{z}/\Lambda _{n}\right) \right) ~\text{for }h_{n}>1\text{%
.}%
\end{array}%
\right.  \notag
\end{eqnarray}

Then the choice probabilities $\left( \pi _{j}\right) $ determined by the
backward-forward procedure coincide with the probabilities induced by the
additive random utility model associated with $\varepsilon $, that is%
\begin{equation*}
\pi _{j}=\Pr \left( U_{j}+\varepsilon _{j}\geq U_{j^{\prime }}+\varepsilon
_{j^{\prime }}~\forall j^{\prime }\in \mathcal{J}\right) .
\end{equation*}

Another well-known relation between the distribution of $\varepsilon $ and
the payoff vector $u$ is given by the Emax operator defined (see McFadden,
1978) by%
\begin{equation*}
G\left( U\right) :=G_{0}\left( U\right) :=\mathbb{E}\left[ \max_{j\in
\mathcal{J}}\left\{ U_{j}+\varepsilon _{j}\right\} \right] ,
\end{equation*}%
and more generally, the restricted Emax at $n$ is defined by taking the
maximum over alternatives that are descendent of $n$, that is%
\begin{equation*}
G_{n}\left( U\right) :=\mathbb{E}\left[ \max_{j\in \widetilde{\mathcal{J}}%
_{n}}\left\{ U_{j}+\varepsilon _{j}\right\} \right]
\end{equation*}%
and one has
\begin{equation*}
u_{n}=G_{n}\left( U\right)
\end{equation*}%
where $u_{n}$ is computed as in~(\ref{utils}). One has%
\begin{equation*}
\frac{\partial G_{n}\left( U\right) }{\partial U_{j}}=\frac{\pi _{j}}{\pi
_{n}}1\left\{ j\in \widetilde{\mathcal{J}}_{n}\right\} ,
\end{equation*}%
and in particular setting $n=0$ one recovers $\partial G\left( U\right)
/\partial U_{j}=\pi _{j}$, which is the Daly-Zachary-Williams theorem.

\subsubsection{The representation result}

From the main theorem in section~\ref{sec:mainthm}, we are able to derive
the following representation.

\begin{proposition}
\label{prop:representation-multiLayer}If $\left( \varepsilon _{j}\right) $
is a random vector distributed according to the nested logit distribution~(%
\ref{nestedLogitMultiLayer}), then one can represent the random vector $%
\left( \varepsilon _{j}\right) _{j\in \mathcal{J}}$ as
\begin{equation}
\varepsilon _{j}=\sum_{t=1}^{d_{n_{j}}}\Lambda _{n_{j}\left( t\right) }\log
Z_{n_{j}\left( t\right) }+\Lambda _{n_{j}}\epsilon _{j}
\label{representation-multiLayer}
\end{equation}%
where $\epsilon _{j}\sim \mathcal{G}\left( 0,1\right) $ and $Z_{n\left(
t\right) }\sim \mathcal{P}\left( \lambda _{t}\right) $ are all independent.
\end{proposition}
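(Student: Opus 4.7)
My plan is to prove the distributional identity by induction on the height $h_n$ of the sub-arborescence rooted at $n$, via a strengthened statement covering the whole subtree at once. Writing $\tilde{\varepsilon}_j$ for the random vector defined by the right-hand side of~(\ref{representation-multiLayer}) and setting $S_n := \sum_{t=1}^{d_n}\Lambda_{n(t)}\log Z_{n(t)}$ (with $S_0 = 0$), I would introduce the detrended variables $\xi^{(n)}_j := \tilde{\varepsilon}_j - S_n$ for $j\in\widetilde{\mathcal{J}}_n$, which by direct inspection involve only $\epsilon_j$ and the $Z_{n'}$ attached to strict descendants of $n$. The induction hypothesis $H(n)$ I would carry is: for every $A\in\mathbb{R}^{\widetilde{\mathcal{J}}_n}$,
\[
\Pr\!\left(\xi^{(n)}_j\leq A_j\text{ for all }j\in\widetilde{\mathcal{J}}_n\right) = \exp\!\left(-\exp(-a_n/\Lambda_n)\right),
\]
where $a_n$ is the value produced by the backward recursion of~(\ref{nestedLogitMultiLayer}) from leaf thresholds $A_j$. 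Applied at $n=0$, where $\Lambda_0 = 1$ and $\xi^{(0)} = \tilde{\varepsilon}$, this asserts that $\tilde{\varepsilon}$ has the nested-logit c.d.f., which is exactly the proposition.

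The base case $h_n = 1$ is a one-line Gumbel computation: $\xi^{(n)}_j = \Lambda_n\epsilon_j$ with $\epsilon_j$ i.i.d.\ $\mathcal{G}(0,1)$, so the joint c.d.f.\ factorizes as $\exp\!\left(-\sum_{j\in\mathcal{J}_n}e^{-A_j/\Lambda_n}\right)$, which coincides with $\exp(-e^{-a_n/\Lambda_n})$ by the definition of $a_n$. For the inductive step at an internal nest $n$, I would partition $\widetilde{\mathcal{J}}_n$ into the leaves $\mathcal{J}_n$ directly attached to $n$ and the subtree leaves $\widetilde{\mathcal{J}}_{n'}$ for each child nest $n'\in\mathcal{N}_n$. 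The telescoping identity $\xi^{(n)}_j = \Lambda_{n'}\log Z_{n'} + \xi^{(n')}_j$, valid for $j\in\widetilde{\mathcal{J}}_{n'}$, re-expresses each child block in terms of $Z_{n'}$ and of variables confined to the subtree of $n'$. Because the blocks attached to distinct $n'$ are built from disjoint independent collections of $Z$'s and $\epsilon$'s, the joint probability factorizes as a product of contributions from $\mathcal{J}_n$ and from each $n'\in\mathcal{N}_n$.

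For each $n'\in\mathcal{N}_n$, I would condition on $Z_{n'}$ and invoke $H(n')$ with the shifted leaf thresholds $A_j - \Lambda_{n'}\log Z_{n'}$. This requires an elementary translation-equivariance lemma for the backward recursion: shifting all leaf thresholds in the subtree of $n'$ by a common constant $s$ shifts $a_{n'}$ by $s$, since at every internal node $m$ one has $-\Lambda_m\log\sum_z e^{-(a_z+s)/\Lambda_m} = s - \Lambda_m\log\sum_z e^{-a_z/\Lambda_m}$. The conditional probability then becomes $\exp(-e^{-a_{n'}/\Lambda_{n'}}Z_{n'})$, and I would unconditionalize using the positive-stable Laplace transform $\mathbb{E}[e^{-tZ_{n'}}] = e^{-t^{\lambda_{n'}}}$, which after applying $\Lambda_{n'} = \lambda_{n'}\Lambda_n$ yields $\exp(-e^{-\lambda_{n'}a_{n'}/\Lambda_{n'}}) = \exp(-e^{-a_{n'}/\Lambda_n})$. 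Assembling the $\mathcal{J}_n$- and $\mathcal{N}_n$-contributions reconstructs $\exp\!\left(-\sum_{z\in\mathcal{N}_n\cup\mathcal{J}_n}e^{-a_z/\Lambda_n}\right) = \exp(-e^{-a_n/\Lambda_n})$, which is $H(n)$. The one delicate step---really the only one---is this last cancellation: the exponent $\lambda_{n'}$ arising from positive-stability must convert the ``local'' scale $\Lambda_{n'}$ of $H(n')$ into the ``parent'' scale $\Lambda_n$ required in the recursion at $n$, which is precisely the multi-layer manifestation of the main theorem of Section~\ref{sec:mainthm}.
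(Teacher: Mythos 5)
Your argument is correct, and it is worth noting how it differs from the paper's own proof even though both proceed by induction on the height of the arborescence. The paper's inductive step works with the maximum over each nest: it collapses $\max_{z\in\mathcal{N}_n\cup\mathcal{J}_n}\{-a_z/\Lambda_n+\epsilon_z\}$ into a single $\mathcal{G}(0,1)$ variable by max-stability of the Gumbel distribution and then absorbs $\log Z_n$ by invoking the main theorem of section~\ref{sec:mainthm} as a black box, so that each nest of height $h+1$ is replaced by a single Gumbel shock one level up. You instead work directly with the joint c.d.f.\ of the detrended subtree vectors $\xi^{(n)}$, factorize it across child blocks by independence, and handle each child nest by conditioning on $Z_{n'}$ and integrating out via the Laplace transform $\mathbb{E}[e^{-tZ_{n'}}]=e^{-t^{\lambda_{n'}}}$ --- in effect inlining the proof of the main theorem rather than citing its statement --- with the bookkeeping delegated to a translation-equivariance lemma for the backward recursion ($a_{n'}$ shifts by $s$ when all leaf thresholds in its subtree shift by $s$). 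What your route buys is a cleanly quantified induction hypothesis $H(n)$ stated at every node in terms of the recursion value $a_n$, an explicit identification of where the exponent $\lambda_{n'}$ performs the scale conversion $\Lambda_{n'}\to\Lambda_n$, and no appeal to Gumbel max-stability at all; the paper's route is shorter on the page but leans on the single-layer machinery and leaves more of the conditioning and independence structure implicit. The computations check out: $\Lambda_{n'}=\lambda_{n'}\Lambda_n$ gives $e^{-\lambda_{n'}a_{n'}/\Lambda_{n'}}=e^{-a_{n'}/\Lambda_n}$, and reassembling the blocks reproduces $\exp(-\sum_{z\in\mathcal{N}_n\cup\mathcal{J}_n}e^{-a_z/\Lambda_n})=\exp(-e^{-a_n/\Lambda_n})$ as required, with the case $n=0$, $\Lambda_0=1$ giving the proposition.
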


\begin{proof}
By induction on $h\geq 0$ and less than the height of the arborescence, one
shows that the probability that
\begin{equation*}
\sum_{t=1}^{d_{n}}\Lambda _{n\left( t\right) }\log Z_{n\left( t\right)
}+\Lambda _{n}\epsilon _{j}\leq a_{j}\forall n\in \mathcal{N},\forall j\in
\mathcal{J}_{n}
\end{equation*}%
coincides with the probability that
\begin{equation}
\begin{array}{l}
\sum_{t=1}^{d_{n}}\Lambda _{n\left( t\right) }\log Z_{n\left( t\right)
}+\Lambda _{n}\epsilon _{j}\leq a_{j},\forall n\in \mathcal{N}%
:h_{n}>h,\forall j\in \mathcal{J}_{n}\text{ and} \\
\sum_{t=1}^{d_{n}}\Lambda _{n\left( t\right) }\log Z_{n\left( t\right)
}+\Lambda _{n}\epsilon _{n^{\prime }}\leq A_{n^{\prime }},\forall n\in
\mathcal{N}:h_{n}=h+1,\forall n^{\prime }\in \mathcal{N}_{n}%
\end{array}
\label{event-1}
\end{equation}%
where $\epsilon _{z}$ is defined on all the nodes $z\in \mathcal{Z}=\mathcal{%
N}\cup \mathcal{J}$ recursively in increasing height order by
\begin{equation*}
\left\{
\begin{array}{l}
\epsilon _{z}=\epsilon _{z}\text{~for }z\in \mathcal{J}\text{ (i.e. }h_{z}=0%
\text{)} \\
\epsilon _{n}:=\lambda _{n}\log Z_{n}+\lambda _{n}\max_{z\in \mathcal{N}%
_{n}\cup \mathcal{J}_{n}}\left\{ -a_{z}/\Lambda _{n}+\epsilon _{z}\right\}
-\lambda _{n}\sum_{z\in \mathcal{N}_{n}\cup \mathcal{J}_{n}}\exp \left(
-a_{z}/\Lambda _{n}\right) ~\text{ for }h_{n}>1\text{.}%
\end{array}%
\right.
\end{equation*}

Base case: if $h_{n}=1$, then $\mathcal{N}_{n}=\emptyset $, and the base
case ($h=0$) is proven.

Inductive step: assume the property proven up to rank $h$, and consider a
nest $n$ of height $h_{n}=h+1$. Then the nodes that are children of $n$ are
either alternatives, or other nests that have height at most $h$. Collecting
all the terms pertaining to $n$, these are
\begin{eqnarray*}
&&\sum_{t=1}^{d_{n}}\Lambda _{n\left( t\right) }\log Z_{n\left( t\right)
}+\Lambda _{n}\epsilon _{j}\leq A_{j},\forall j\in \mathcal{J}_{n}\text{, and%
} \\
&&\sum_{t=1}^{d_{n}}\Lambda _{n\left( t\right) }\log Z_{n\left( t\right)
}+\Lambda _{n}\epsilon _{n^{\prime }}\leq a_{n^{\prime }},\forall n^{\prime
}\in \mathcal{N}_{n}.
\end{eqnarray*}

The conditional probability that this occurs is the probability that
\begin{equation*}
\sum_{t=1}^{d_{n}}\Lambda _{n\left( t\right) }\log Z_{n\left( t\right)
}+\Lambda _{n}\tilde{\epsilon}_{n}\leq a_{n},
\end{equation*}

where we have set $\tilde{\epsilon}_{n}:=\max_{z\in \mathcal{N}_{n}\cup
\mathcal{J}_{n}}\left\{ -a_{z}/\Lambda _{n}+\epsilon _{z}\right\}
-\sum_{z\in \mathcal{N}_{n}\cup \mathcal{J}_{n}}\exp \left( -a_{z}/\Lambda
_{n}\right) $. This is the same as the probability that
\begin{equation*}
\sum_{t=1}^{d_{n\left( t-1\right) }}\Lambda _{n\left( t\right) }\log
Z_{n\left( t\right) }+\Lambda _{n}\log Z_{n}+\Lambda _{n}\tilde{\epsilon}%
_{n}\leq a_{n},
\end{equation*}

and setting $\epsilon _{n}:=\lambda _{n}\log Z_{n}+\lambda _{n}\tilde{%
\epsilon}_{n}^{1}$ yields%
\begin{equation*}
\sum_{t=1}^{d_{n\left( t-1\right) }}\Lambda _{n\left( t\right) }\log
Z_{n\left( t\right) }+\Lambda _{n\left( t-1\right) }\epsilon _{n}\leq a_{n}.
\end{equation*}

Collecting all $n$ with height $h_{n}=h+1$ allows to complete the proof of
the inductive step.
\end{proof}

\begin{corollary}
Under the assumptions of Proposition~\ref{prop:representation-multiLayer},
for $j\neq j^{\prime }$, the correlation between $\varepsilon _{j}$ and $%
\varepsilon _{j^{\prime }}$ is $1-\Lambda _{n}^{2}$ where $n$ is the lowest
common ancestor between $j$ and $j$'.
\end{corollary}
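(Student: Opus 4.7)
The plan is to split each $\varepsilon_{j}$ into a part shared with $\varepsilon_{j'}$ and an independent residual, then compute the variance of the shared part using the main theorem from Section~\ref{sec:mainthm}.

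Let $n$ be the lowest common ancestor of $j$ and $j'$. Since $j\neq j'$, $n$ is a nest, and the two paths from the root agree on their first $d_{n}$ entries: $n_{j}(t)=n_{j'}(t)=n(t)$ for $t=1,\ldots,d_{n}$. Splitting the sum in representation~(\ref{representation-multiLayer}) at $t=d_{n}$, I would write
\[
\varepsilon_{j}=S+R_{j},\qquad \varepsilon_{j'}=S+R_{j'},\qquad S:=\sum_{t=1}^{d_{n}}\Lambda_{n(t)}\log Z_{n(t)},
\]
where the residuals $R_{j},R_{j'}$ involve only $Z$'s and $\epsilon$'s indexed by disjoint sub-branches below $n$. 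By the joint independence assumption in Proposition~\ref{prop:representation-multiLayer}, the three variables $S$, $R_{j}$, $R_{j'}$ are mutually independent. Hence $\mathrm{Cov}(\varepsilon_{j},\varepsilon_{j'})=\mathrm{Var}(S)=\mathrm{Var}(\varepsilon_{j})-\mathrm{Var}(R_{j})$.

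The one nontrivial step is to identify the laws of $R_{j}$ and $\varepsilon_{j}$. I would run a downward induction along the path $n_{j}(d_{n_{j}}),n_{j}(d_{n_{j}}-1),\ldots,n_{j}(0)$, setting
\[
\eta_{k}:=\sum_{t=k+1}^{d_{n_{j}}}\Lambda_{n_{j}(t)}\log Z_{n_{j}(t)}+\Lambda_{n_{j}}\epsilon_{j},
\]
so that $\eta_{d_{n_{j}}}=\Lambda_{n_{j}}\epsilon_{j}\sim \mathcal{G}(0,\Lambda_{n_{j}})$ and $\eta_{k-1}=\Lambda_{n_{j}(k)}\log Z_{n_{j}(k)}+\eta_{k}$. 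Assuming inductively $\eta_{k}\sim \mathcal{G}(0,\Lambda_{n_{j}(k)})$, the main theorem applied to $\eta_{k}/\Lambda_{n_{j}(k)}\sim \mathcal{G}(0,1)$ and $Z_{n_{j}(k)}\sim \mathcal{P}(\lambda_{n_{j}(k)})$ (independent by construction) yields $\eta_{k-1}\sim \mathcal{G}(0,\Lambda_{n_{j}(k)}/\lambda_{n_{j}(k)})=\mathcal{G}(0,\Lambda_{n_{j}(k-1)})$. Stopping at $k=d_{n}$ gives $R_{j}\sim \mathcal{G}(0,\Lambda_{n})$; stopping at $k=0$ gives $\varepsilon_{j}\sim \mathcal{G}(0,1)$.

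Putting everything together, $\mathrm{Var}(\varepsilon_{j})=\pi^{2}/6$ and $\mathrm{Var}(R_{j})=\Lambda_{n}^{2}\pi^{2}/6$, so $\mathrm{Var}(S)=(1-\Lambda_{n}^{2})\pi^{2}/6$, and dividing by $\mathrm{Var}(\varepsilon_{j})$ yields the announced correlation $1-\Lambda_{n}^{2}$. The only real obstacle is the telescoping induction that identifies $R_{j}$ as Gumbel with scale $\Lambda_{n}$; once that is in place, everything reduces to a one-line variance decomposition that exploits the explicit independence structure built into representation~(\ref{representation-multiLayer}).
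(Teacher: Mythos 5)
Your proof is correct and follows essentially the same route as the paper: both decompose $\varepsilon_{j}$ and $\varepsilon_{j'}$ into the shared sum $\sum_{t=1}^{d_{n}}\Lambda_{n(t)}\log Z_{n(t)}$ up to the lowest common ancestor plus independent residuals distributed as $\mathcal{G}\left(0,\Lambda_{n}\right)$, and read off the covariance as the variance of the shared part. Your explicit telescoping induction identifying $R_{j}\sim\mathcal{G}\left(0,\Lambda_{n}\right)$ via the main theorem merely spells out a step the paper asserts without detail.
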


\begin{proof}
Let $n$ be the lowest common ancestor between $j$ and $j^{\prime }$, we get
that if $n=n_{j}\left( s\right) $
\begin{equation*}
\sum_{t=s}^{d_{n_{j}}}\frac{\Lambda _{n_{j}\left( t\right) }}{\Lambda _{n}}%
\log Z_{n_{j}\left( t\right) }+\frac{\Lambda _{n_{j}}}{\Lambda _{n}}\epsilon
_{j}
\end{equation*}%
is distributed as a $\mathcal{G}\left( 0,1\right) $ distribution, which one
shall denote $\epsilon _{n,j}$. The same applies to the representation of $%
\varepsilon _{j^{\prime }}$, and hence $(\varepsilon _{j},\varepsilon
_{j^{\prime }})$ can be represented as%
\begin{equation*}
\left\{
\begin{array}{c}
\varepsilon _{j}=\sum_{t=1}^{d_{n}}\Lambda _{n\left( t\right) }\log
Z_{n\left( t\right) }+\Lambda _{n}\epsilon _{n,j} \\
\varepsilon _{j^{\prime }}=\sum_{t=1}^{d_{n}}\Lambda _{n\left( t\right)
}\log Z_{n\left( t\right) }+\Lambda _{n}\epsilon _{n,j^{\prime }}%
\end{array}%
\right. ,
\end{equation*}%
where $\left( \epsilon _{n,j},\epsilon _{n,j^{\prime }}\right) $ are
independent and are distributed according to a $\mathcal{G}\left( 0,1\right)
$ distribution. Therefore
\begin{equation*}
cov\left( \varepsilon _{j},\varepsilon _{j^{\prime }}\right) =var\left(
\sum_{t=1}^{d_{n}}\Lambda _{n\left( t\right) }\log Z_{n\left( t\right)
}\right) =\left( 1-\Lambda _{n}^{2}\right) var\left( \varepsilon \right) ,
\end{equation*}%
which established the formula.
\end{proof}

\section{Correlation structure of the multivariate Fr\'{e}chet distribution
with Gumbel copula\label{sec:frechet}}

We can use the previous results in order to provide an explicit formula for
the correlation between two Fr\'{e}chet random variables coupled with a
Gumbel copula of parameter $1/\lambda $, a formula which can be useful, see
Mattsson et al. (2014), Fosgerau et al. (2018), and H\aa rsman and Mattsson
(2020)\footnote{%
The author thanks L.-G. Mattsson for an insightful question leading to this
section.}. Recall that a Gumbel copula of parameter $1/\lambda $ is
expressed as $C\left( u,v\right) =\exp \left( -\left( \left( \left( -\log
u\right) ^{1/\lambda }+\left( -\log v\right) ^{1/\lambda }\right) \right)
^{\lambda }\right) $. Also recall that a Fr\'{e}chet random variable of
shape parameter $\alpha >0$ has c.d.f. $F\left( x\right) =\exp \left(
-x^{-\alpha }\right) $; it can be represented as $\exp \left( \varepsilon
/\alpha \right) $, where $\varepsilon \sim \mathcal{G}\left( 0,1\right) $.
It is easy to see that a pair $\left( \delta _{1},\delta _{2}\right) $ of Fr%
\'{e}chet random variables coupled with a Gumbel copula of parameter $%
1/\lambda $ can be written as $\left( \delta _{1}=\exp \left( \varepsilon
_{1}/\alpha \right) ,\delta _{2}=\exp \left( \varepsilon _{2}/\alpha \right)
\right) $, where the joint c.d.f. of $\left( \varepsilon _{1},\varepsilon
_{2}\right) $ is $\exp \left( -\left( \exp \left( -x/\lambda \right) +\exp
\left( -y/\lambda \right) \right) ^{\lambda }\right) $, falling under
specification~(\ref{nestedLogit}). By proposition~\ref{prop:representation},
it follows that we can represent $\left( \delta _{1},\delta _{2}\right) $ as%
\begin{equation*}
\delta _{1}=\exp \left( \frac{\lambda }{\alpha }\left( \epsilon _{1}+\log
Z\right) \right) ,\delta _{1}=\exp \left( \frac{\lambda }{\alpha }\left(
\epsilon _{2}+\log Z\right) \right)
\end{equation*}%
where $\epsilon _{1},\epsilon _{2}\sim \mathcal{G}\left( 0,1\right) $, $%
Z\sim \mathcal{P}\left( \lambda \right) $, and $\left( \epsilon
_{1},\epsilon _{2},Z\right) $ are independent. We can therefore provide a
close-form formula for the correlation between $\delta _{1}$ and $\delta
_{2} $.

\begin{proposition}
\label{prop:correlFrechet}Assume $\alpha >2$ and $\lambda \in \left(
0,1\right) $. Then the correlation between two Fr\'{e}chet random variables
of shape parameter $\alpha $ coupled by a Gumbel copula of parameter $%
1/\lambda $ is given by%
\begin{equation}
\rho \left( \delta _{1},\delta _{2}\right) =\frac{\Gamma \left( 1-2/\alpha
\right) \Gamma \left( 1-\frac{\lambda }{\alpha }\right) ^{2}/\Gamma \left(
1-2\lambda /\alpha \right) -\Gamma \left( 1-1/\alpha \right) ^{2}}{\Gamma
\left( 1-2/\alpha \right) -\Gamma \left( 1-1/\alpha \right) ^{2}}.
\label{correlFrechetFormula}
\end{equation}
\end{proposition}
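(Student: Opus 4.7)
The plan is to reduce the correlation computation to three moment evaluations, using the representation $\delta_i = Z^{\lambda/\alpha}\exp(\lambda \epsilon_i/\alpha)$ established just before the statement, where $\epsilon_1, \epsilon_2 \sim \mathcal{G}(0,1)$ and $Z \sim \mathcal{P}(\lambda)$ are mutually independent. Since everything factorizes as a product, the expectations I need, namely $\mathbb{E}[\delta_i]$, $\mathbb{E}[\delta_i^2]$, and $\mathbb{E}[\delta_1 \delta_2]$, all split into a power moment of $Z$ times an exponential moment of a Gumbel.

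The two ingredients I would invoke are completely standard. For the Gumbel factor, $\mathbb{E}[\exp(t \epsilon)] = \Gamma(1-t)$ whenever $t<1$. For the positive stable factor, I would apply formula~(\ref{momentsOfZ}), which gives $\mathbb{E}[Z^{\kappa}] = \Gamma(1-\kappa/\lambda)/\Gamma(1-\kappa)$ on $\kappa \in (0,\lambda)$. Evaluating at $\kappa = \lambda/\alpha$ and $\kappa = 2\lambda/\alpha$ (both in $(0,\lambda)$ since $\alpha>2$) I obtain
$$\mathbb{E}[\delta_i] = \frac{\Gamma(1-1/\alpha)}{\Gamma(1-\lambda/\alpha)}\,\Gamma(1-\lambda/\alpha) = \Gamma(1-1/\alpha),$$
$$\mathbb{E}[\delta_i^2] = \frac{\Gamma(1-2/\alpha)}{\Gamma(1-2\lambda/\alpha)}\,\Gamma(1-2\lambda/\alpha) = \Gamma(1-2/\alpha),$$
recovering the usual Fr\'{e}chet moments (and so $\operatorname{Var}(\delta_i) = \Gamma(1-2/\alpha)-\Gamma(1-1/\alpha)^2$, finite precisely because $\alpha>2$).

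The only genuinely new computation is the cross-moment. Since $\epsilon_1$ and $\epsilon_2$ are independent but share the common factor $Z$, I get
$$\mathbb{E}[\delta_1\delta_2] = \mathbb{E}[Z^{2\lambda/\alpha}]\,\mathbb{E}[\exp(\lambda \epsilon_1/\alpha)]\,\mathbb{E}[\exp(\lambda \epsilon_2/\alpha)] = \frac{\Gamma(1-2/\alpha)\,\Gamma(1-\lambda/\alpha)^2}{\Gamma(1-2\lambda/\alpha)},$$
so that $\operatorname{Cov}(\delta_1,\delta_2)$ is the numerator of~(\ref{correlFrechetFormula}). Dividing by the variance yields the claimed formula. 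There is no substantive obstacle; the only thing to verify is that the Gamma-function arguments are all admissible, which follows from $\lambda\in(0,1)$ and $\alpha>2$.
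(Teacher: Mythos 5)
Your proposal is correct and follows essentially the same route as the paper: both use the representation $\delta_i = Z^{\lambda/\alpha}\exp(\lambda\epsilon_i/\alpha)$, the Gumbel moment generating function $\Gamma(1-t)$, and formula~(\ref{momentsOfZ}) to compute $\mathbb{E}[\delta_1\delta_2]=\Gamma(1-\lambda/\alpha)^2\,\mathbb{E}[Z^{2\lambda/\alpha}]$ and then the correlation. The only cosmetic difference is that you rederive $\mathbb{E}[\delta_i]$ and $\mathbb{E}[\delta_i^2]$ through the factorization (a useful consistency check), whereas the paper reads them off directly from $\delta_i=\exp(\varepsilon_i/\alpha)$ with $\varepsilon_i\sim\mathcal{G}(0,1)$.
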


\begin{proof}
We have $\rho \left( \delta _{1},\delta _{2}\right) =\left( \mathbb{E}\left[
\delta _{1}\delta _{2}\right] -\mathbb{E}\left[ \delta _{1}\right]
^{2}\right) /\left( \mathbb{E}\left[ \delta _{1}^{2}\right] -\mathbb{E}\left[
\delta _{1}\right] ^{2}\right) $. Using the well-known fact that the $%
\mathcal{G}\left( 0,1\right) $ distribution has moment generating function $%
\Gamma \left( 1-t\right) $, where $\Gamma $, we get that $\mathbb{E}\left[
\delta _{1}\right] =\Gamma \left( 1-1/\alpha \right) $, and that $\mathbb{E}%
\left[ \delta _{1}^{2}\right] =\Gamma \left( 1-2/\alpha \right) $ $\mathbb{E}%
\left[ \delta _{1}\delta _{2}\right] =\mathbb{E}\left[ \exp \left( \frac{%
\lambda }{\alpha }\left( \epsilon _{1}+\epsilon _{2}+2\log Z\right) \right) %
\right] =\Gamma \left( 1-\frac{\lambda }{\alpha }\right) ^{2}\mathbb{E}\left[
Z^{2\lambda /\alpha }\right] $. Further, one has by formula~(\ref{momentsOfZ}%
) that $\mathbb{E}\left[ Z^{2\lambda /\alpha }\right] =\Gamma \left(
1-2/\alpha \right) /\Gamma \left( 1-2\lambda /\alpha \right) $, from which
we deduce~(\ref{correlFrechetFormula}).
\end{proof}

\section{Conclusion}

This paper is the first to provide a complete and rigorous random utility
representation of the nested logit model, solving a four-decade old problem
since Ben-Akiva's seminal work on the topic. The representation will
hopefully shed new light on the nested logit model and help formulate
specifications as well as interpret estimated nested logit models. Since
this logit model is popular in practical work, the representation can be
useful in many applications, as in the example given in section~\ref%
{sec:frechet}. An additional contribution of the paper is the complete
derivation of the general recursive nested logit model based on trees,
carried out in section~\ref{par:multiple-nests}, which is typically not
performed in full in expository texts. We feel that the standard textbook
treatment of the nested logit model would benefit from adopting the present
approach.

\end{document}